\def\doi#1{   {\href{http://dx.doi.org/#1}
   {{\mdseries\ttfamily DOI}}}}
\def\TT{{\mathbb T}}
\renewcommand{\phi}{\varphi}
\newcommand{\beeq}{\begin{equation}}\newcommand{\eneq}{\end{equation}}
\def \endprf{\hfill  {\vrule height6pt width6pt depth0pt}\medskip}
\def\<{\langle}             \def\>{\rangle}
\def\({\left(}                 \def\){\right)}
\numberwithin{equation}{section}
\newtheorem{thm}{Theorem}[section]
 \newtheorem{lem}[thm]{Lemma}
 \newtheorem{remark}[thm]{Remark}
\title[Positivity of temperature for some non-isothermal models]
{Positivity of temperature for some non-isothermal fluid models}
\author{Ning-An Lai}
\address{Institute of Nonlinear Analysis and Department of Mathematics\\
		Lishui University\\Lishui 323000, China}
		\email{ninganlai@lsu.edu.cn}
\author{Chun Liu}
\address{Department of Applied Mathematics, Illinois Institute of Technology, Chicago, IL 60616, USA}
\email{cliu124@iit.edu}
\author{Andrei Tarfulea}
\address{Department of Mathematics, Louisiana State University, Baton Rouge, LA 70803, USA}\email{tarfulea@lsu.edu}
\date{\today}
\begin{document}
\maketitle
\begin{abstract}
We establish three partial differential equation models describing the thermodynamic behavior of a fluid by combining the energetic variational approach, appropriate constitutive relations, and classical thermodynamic laws. By using a clear algebraic approach, we show a maximum/minimum principle for some auxiliary variables involving the absolute temperature $\theta$ and density $\rho$ under some special conditions, which then yields the positivity of the temperature. This important fact implies the thermodynamic consistency for our models.

\end{abstract}
\tableofcontents

\section{Introduction}
\noindent
The study of heat transfer in fluid dynamics has recently attracted growing attention. In non-isothermal models, the temperature is a non-constant material property that flows with the fluid. This creates local changes in the density and viscosity, which then cause the fluid flow to deviate from the classically expected outcome and further influence the rate of heat transfer. This two-way coupling phenomenon is prevalent in heat exchangers, chemical reactors, atmospheric flows, and processes in which components are cooled. One example we face almost every day is the noble gas flow in fluorescent lights. If thermal effects become significant in the flow of fluid through porous media, further applications appear: solidification of binary mixtures, dehumidification,
insulation, and heat pipes (see \cite{Kav} for more details). Besides the application in mechanical engineering, we also refer to \cite{BSKL} for an enhanced gas recovery application by non-isothermal compressible gas flow.\\
\\
Due to the prevalence of thermal effects in fluid flow, the literature has seen more and more non-isothermal fluid dynamic models established and delicately analyzed. We list a few of them here. By choosing an appropriate internal energy and entropy production, M$\acute{\text{a}}$lek and Pr$\mathring{\text{u}}\check{\text{s}}$a \cite{MP} arrive at a modern phenomenological theory (based in thermodynamics) of constitutive relations for compressible and incompressible viscous heat-conducting fluids (Navier-Stokes-Fourier), Korteweg fluids, and (in)compressible heat-conducting viscoelastic fluids (Oldroyd-B and Maxwell). The existence of weak solutions for the Navier-Stokes-Fourier system (see \cite{Gal, Zey}) describing the evolution of a Newtonian heat-conducting fluid in a bounded domain is systematically studied in \cite{Fei, NoPe}. Generalized non-isothermal compressible and incompressible non-Newtonian fluid systems are derived in \cite{KS} using the energetic variational approach (see \cite{Gya, HKL}),  which is based on Strutt \cite{Str} and Onsager \cite{Ons1, Ons2}. This approach combines the two systems derived from the least action principle and the maximum dissipation principle, and has been widely used to derive other non-isothermal models  by combining the basic thermodynamic laws.
We refer to \cite{LWL} for non-isothermal electrokinetics, \cite{DL} for the non-isothermal general Ericksen-Leslie system, \cite{HLLL} for the non-isothermal Poisson-Nernst-Planck-Fourier system, and \cite{SL} for the Brinkman-Fourier system with ideal gas equilibrium.\\
\\
It is believed that the positivity of the absolute temperature is a key postulate of
thermodynamics. Unfortunately, the crucial proof of this property for all times (starting from positive initial absolute temperature) is far from trivial in the non-isothermal setting. In \cite{DH}, the invalidity of negative temperature has been proved by demonstrating that it arises from the use of an entropy definition
that is inconsistent both mathematically and thermodynamically. In the recent series works on the Navier-Stokes-Fourier system by Feireisl et al., the definition of weak solution includes the restriction on absolute temperature $\theta$
\[
\theta>0,~~a.a.~in~(0, T)\times \Omega,
\]
and can actually prove the positivity of temperature if it emanates from positive initial data. See \cite{FS2016} for conditional regularity of very weak solutions to the Navier-Stokes-Fourier system, \cite{Fei2015} for the existence and stability of the weak solutions to the
Navier-Stokes-Fourier system and their relevance in the study of convergence of numerical schemes, and \cite{CFJP2020} for the existence of weak solutions to the stationary Navier-Stokes-Fourier system. In \cite{Tar}, the third author studied two hydrodynamic model problems (one incompressible and one compressible) with three-dimensional fluid flow on the torus and
temperature-dependent viscosity and conductivity, and established a positive lower bound for the temperature in each case. We also refer to the monograph \cite{Fei2009}, a detailed introduction to the singular limits and
scale analysis for the Navier-Stokes-Fourier system, in which the positivity of absolute temperature was proved by using a Poincar\'{e}-type inequality. We have to mention that thermodynamic consistency (positivity of absolute temperature) for a class of phase change models proposed by Fr\'{e}mond \cite{Fre} has been widely studied in the past, see \cite{CS, LSS, LuS, LuSS, SS} and references therein.\\
\\
This work focuses on the transport of heat in addition to fluid flow through porous media, and gives a general framework for deriving non-isothermal models by combining the energetic variational approach and some basic thermodynamic laws. From three different given free energies, we establish three different non-isothermal models, which we call \emph{non-isothermal ideal gas}, \emph{non-isothermal porous media}, and \emph{generalized non-isothermal porous media}. Moreover, in some special cases for the former two models, we find maximum/minimum principles for some auxiliary quantities related to the density and temperature, by adapting an idea originally from the work \cite{Tar}. This then implies the positivity of the absolute temperature. To avoid the problems connected with the boundary behavior of the fluid, we consider the problems on the torus and impose periodic boundary conditions.

\subsection{Outline}
In Section 2, we present a general derivation of models for non-isothermal fluid flow starting from a given free energy function. In Section 3,
we look at three free energy functions to obtain equations for the ideal gas, porous media, and generalized porous media equations.
In Section 4, we prove a priori maximum and/or minimum principles for the temperature and density of the ideal gas model, by first proving
them for certain auxiliary variables adapted to the structure of the equation. In Section 5, we prove analogous (but slightly weaker) results
for the generalized porous media equations.

\section{General Frame to Derive the Non-isothermal Model}
\noindent
We start from the free energy, $\Psi(\rho, \theta)$, which is a function of the density $\rho$ and the absolute temperature $\theta$.
With this in hand, we then define
\begin{itemize}
  \item (Specific) entropy of the system:
  \[
  \eta(\rho, \theta):=-\partial_{\theta}\Psi.\\
  \]
  \item (Specific) internal energy (the Legendre transform in $\theta$ of the free energy):
  \[
  e(\rho, \theta):=\Psi-\partial_\theta\Psi=\Psi+\eta\theta.\\
  \]
  \item Pressure:
  \[
  p=\Psi_{\rho}\rho-\Psi.\\
  \]
\end{itemize}
We also take $\rho$ and $\eta$ as new state variables and rewrite the internal energy function as
\[
e_1(\rho, \eta)=e\left(\rho, \theta(\rho, \eta)\right).
\]
A calculation shows that
\begin{equation}\label{deentropy}
\begin{aligned}
e_{1\eta}=\theta,~~~e_{1\rho}=\Psi_{\rho}.
\end{aligned}
\end{equation}
For the pressure we see that
\begin{equation}\label{ptheta}
\begin{aligned}
\partial_{\theta}p=\Psi_{\rho\theta}\rho-\Psi_{\theta}=\eta-\eta_{\rho}\rho.
\end{aligned}
\end{equation}
Additionally, a direct computation shows
\begin{equation}\label{nablap}
\begin{aligned}
\nabla p&=\nabla(\Psi_{\rho}\rho-\Psi)\\
&=(\Psi_{\rho\rho}\nabla\rho+\Psi_{\rho\theta}\nabla\theta)\rho+\Psi_{\rho}\nabla\rho-\Psi_{\rho}\nabla\rho-\Psi_{\theta}\nabla\theta\\
&=(\Psi_{\rho\rho}\nabla\rho+\Psi_{\rho\theta}\nabla\theta)\rho+\eta\nabla\theta\\
&=\rho\nabla e_{1\rho}+\eta\nabla e_{1\eta}.
\end{aligned}
\end{equation}
\noindent
We are now in position to derive a Darcy type diffusion law.
\begin{lem}
Let $u(t, x)$ be the velocity field of the fluid. Then, 
\begin{equation}\label{darcy}
\begin{aligned}
\nabla p=-\rho u.
\end{aligned}
\end{equation}
\end{lem}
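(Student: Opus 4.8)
The plan is to obtain \eqref{darcy} not from the thermodynamic definitions alone but as the force balance produced by the energetic variational approach in the inertialess (overdamped) regime appropriate to flow through porous media. Accordingly no kinetic energy enters the action, and \eqref{darcy} should emerge as the Euler--Lagrange equation of a Rayleighian pairing the rate of change of the total internal energy with a Darcy-type dissipation.

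First I would fix the kinematics used as the constraint in the variational computation: the mass density and the entropy density are carried by the flow, $\rho_t+\ddiv(\rho u)=0$ and $\eta_t+\ddiv(\eta u)=0$ (heat conduction and the associated entropy production are accounted for in the separate energy/entropy balance of the full model and do not enter the flux relation derived here). In the state variables $(\rho,\eta)$ the total internal energy is $\mathcal F=\int_\TT e_1(\rho,\eta)\,dx$; differentiating along the flow and integrating by parts on the torus gives
\[
\frac{d}{dt}\mathcal F=\int_\TT\big(e_{1\rho}\rho_t+e_{1\eta}\eta_t\big)\,dx=\int_\TT\big(\rho\nabla e_{1\rho}+\eta\nabla e_{1\eta}\big)\cdot u\,dx=\int_\TT\nabla p\cdot u\,dx,
\]
the last equality being exactly the Gibbs-type identity \eqref{nablap}. (One can reach the same formula through a Lagrangian flow-map variation $\phi^\e=\phi+\e Y$: using $\tfrac{d}{d\e}\det\nabla_X\phi\big|_{\e=0}=(\ddiv y)\det\nabla_X\phi$ and \eqref{deentropy} with $e=\Psi+\eta\theta$, the bulk term of $\tfrac{d}{d\e}\mathcal F\big|_{\e=0}$ equals $e_1-\rho e_{1\rho}-\eta e_{1\eta}=\Psi-\rho\Psi_\rho=-p$, so $\tfrac{d}{d\e}\mathcal F\big|_{\e=0}=\int_\TT\nabla p\cdot y\,dx$.)

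Next I would introduce the Darcy dissipation functional $\mathcal D(u)=\int_\TT\rho|u|^2\,dx$ and form the Rayleighian $\mathcal R(u)=\frac{d}{dt}\mathcal F+\tfrac12\mathcal D(u)=\int_\TT\nabla p\cdot u\,dx+\tfrac12\int_\TT\rho|u|^2\,dx$, with $\rho$ and $\eta$ frozen at the given instant. The maximum-dissipation principle selects the velocity minimizing $\mathcal R$; since $\mathcal R$ is quadratic in $u$ with positive-definite leading term, the unique minimizer satisfies $\delta\mathcal R/\delta u=\nabla p+\rho u=0$, which is \eqref{darcy}. Substituting back also recovers the energy law $\frac{d}{dt}\mathcal F=-\int_\TT\rho|u|^2\,dx$ for the flow.

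The only genuinely substantive step is the identity $\frac{d}{dt}\mathcal F=\int_\TT\nabla p\cdot u\,dx$: the chemical-potential contribution $\rho\nabla e_{1\rho}$ and the temperature contribution $\eta\nabla e_{1\eta}=\eta\nabla\theta$ must combine into a single gradient $\nabla p$, which is precisely the content of \eqref{nablap} (equivalently, of the thermodynamic relations \eqref{deentropy} for $e_1$ together with $p=\Psi_\rho\rho-\Psi$). Once that identity is in hand, \eqref{darcy} is just the standard overdamped force balance of the energetic variational framework, and the remaining computations (the variation of the dissipation and the integrations by parts on $\TT$) are routine.
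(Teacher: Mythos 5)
Your argument is correct, and it reaches \eqref{darcy} by a genuinely different (though ultimately equivalent) route within the energetic variational framework. The paper works in Lagrangian coordinates with the Helmholtz free energy $\Psi(\rho,\theta)$: it pulls $\Psi\big(\rho_0/J,\theta_0\big)J$ back to the reference configuration, varies the flow map $x\mapsto x+\e y$, and reads off the conservative force $-\nabla p$ from the determinant identity for $J$; separately it varies the dissipation functional $\tfrac12\int\rho|u|^2$ in $u$ to get the dissipative force $\rho u$, and then invokes force balance. You instead work in Eulerian coordinates with the internal energy $e_1(\rho,\eta)$, impose transport of both $\rho$ and $\eta$, compute $\tfrac{d}{dt}\int e_1\,dx=\int\nabla p\cdot u\,dx$ directly from the continuity equations and \eqref{nablap}, and then minimize a single Rayleighian $\tfrac{d}{dt}\mathcal F+\tfrac12\mathcal D$ over $u$. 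The two routes produce the same conservative contribution $\nabla p$ because the Lagrangian bulk term $\Psi-\rho\Psi_\rho$ in the paper's computation equals $e_1-\rho e_{1\rho}-\eta e_{1\eta}$ by the Legendre-transform relations \eqref{deentropy} (both equal $-p$); you note this identity explicitly in your parenthetical flow-map check, which is the key point making the substitution of potentials and constraints legitimate. What each approach buys: the paper's Lagrangian LAP/MDP derivation makes transparent how $\nabla p$ arises as a boundary-free push-forward of the free-energy variation, and it matches the ``isothermal along trajectories'' constraint ($\theta=\theta_0(X)$) that is most natural when starting from $\Psi$. Your Rayleighian (Onsager-type) derivation is shorter, stays entirely in Eulerian variables, and isolates the one nontrivial identity (the Gibbs relation $\rho\nabla e_{1\rho}+\eta\nabla e_{1\eta}=\nabla p$) as the content of the lemma; the price is that it leans on the equivalence of the isentropic constraint $\eta_t+\ddiv(\eta u)=0$ with the paper's advected-temperature constraint, which is invisible unless one records the Legendre-duality check you included. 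Both are standard and correct; no gap.
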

\begin{proof}

As in \cite{GKL, HKL}, the total energy and dissipation are defined by
\[
E^\text{total}=\int_{\Omega_t^x}\Psi(\rho, \theta)dx, ~~~\mathcal{D}^\text{total}=\frac12\int_{\Omega_t^x}\rho u^2dx,
\]
where $\Omega_t^x$ is the deformed configuration corresponding to the reference configuration $\Omega_0^X$; here $X$ represents
Lagrangian coordinates and $x$ represents Eulerian coordinates. We first rewrite the energy functional in the Lagrangian coordinate system, and then take the least action principle conservative force
\begin{equation}\label{Lag}
\begin{aligned}
A(x(X, t))=-\int_0^T\int_{\Omega_0^X}\Psi\left(\frac{\rho_0(X)}{J}, \theta_0(X)\right)JdXdt,
\end{aligned}
\end{equation}
where $J=\det F$ and $F=\frac{\partial x}{\partial X}$ denotes the deformation gradient. Then, taking the variation for any smooth compactly supported $y(X, t)=\widetilde{y}(x(X, t), t)$ with respect to $x$ yields
\begin{equation}
 \begin{aligned}
 &\frac{d}{d\varepsilon}\bigg|_{\varepsilon=0}A(x(X,t)+\varepsilon y(X,t))=\frac{d}{d\varepsilon}\bigg|_{\varepsilon=0}A(x+\varepsilon y)\\
 &=-\frac{d}{d\varepsilon}\bigg|_{\varepsilon=0}\int_{0}^{T}\int_{\Omega^{X}_{0}}
 \Psi\bigg(\frac{\rho_0(X)}{{\rm det}\frac{\partial(x+\varepsilon y)}{\partial X}}, \theta_0(X)\bigg)\bigg({\rm det}\frac{\partial(x+\varepsilon y)}{\partial X}\bigg)dXdt\\
 &=-\int_{0}^{T}\int_{\Omega^{X}_{0}}\bigg(\frac{d}{d\varepsilon}
 \bigg|_{\varepsilon=0}\Psi\bigg(\frac{\rho(X)}{{\rm det}\frac{\partial(x+\varepsilon y)}{\partial X}}, \theta_0(X)\bigg)\bigg)
 \cdot\bigg({\rm det}\frac{\partial x}{\partial X}\bigg)\\
 &-\Psi\bigg(\frac{\rho_{0}(X)}{\rm det\frac{\partial x}{\partial X}}, \theta_0(X)\bigg)\cdot \frac{d}{d\varepsilon}\bigg|_{\varepsilon=0}\bigg({\rm det}\frac{\partial(x+\varepsilon y)}{\partial X}\bigg)dXdt\\
 &=-\int_{0}^{T}\int_{\Omega^{X}_{0}}
 \Psi_{\rho}\bigg(\frac{\rho_{0}(X)}{J}, \theta_0(X)\bigg)\cdot \bigg(-\frac{\rho_{0}(X)}{J^{2}}\bigg)\cdot J\cdot {\rm tr}\bigg(\frac{\partial X}{\partial x}\frac{\partial y}{\partial X}\bigg)\cdot J\\
 &+\Psi\bigg(\frac{\rho_{0}(X)}{J},\theta_0(X)\bigg)\cdot J\cdot {\rm tr}\bigg(\frac{\partial X}{\partial x}\frac{\partial y}{\partial X}\bigg)dXdt\\
 &=\int_{0}^{T}\int_{\Omega^{x}_{t}}\left(\Psi_{\rho}(\rho(x,t))\cdot \rho(x,t)-\Psi(\rho(x,t))\right)\cdot(\nabla_{x}\cdot \widetilde{y})dxdt\\
 &=\int_{0}^{T}\int_{\Omega^{x}_{t}}-\nabla_{x}\left(\Psi_{\rho}(\rho(x,t))\cdot \rho(x,t)-\Psi(\rho(x,t))\right)\cdot \widetilde{y}dxdt,\\
 \end{aligned}
\end{equation}
which gives the conservative force by combining with the definition of pressure $p$
\begin{equation}\label{conforce}
\begin{aligned}
\text{Force}_{\text{cons}}&=-\nabla_{x}\left(\Psi_{\rho}(\rho(x,t))\cdot \rho(x,t)-\Psi(\rho(x,t))\right)\\
&=-\nabla p.\\
\end{aligned}
\end{equation}
On the other hand, according to the maximum dissipation principle, taking variation with respect to the velocity to the dissipation functional, we may get the dissipative force
\begin{equation}\label{disforce}
\begin{aligned}
\delta_u\mathcal{D}=\int_{\Omega_t^x}\rho u\delta udx=\int_{\Omega_t^x}\text{Force}_\text{diss}\delta udx.\\
\end{aligned}
\end{equation}
With these two forces in hand, we may then apply Newton's force balance law, which states that all forces, both conservative and dissipative in kind, add up to zero (``action'' equals ``reaction''), thus
\[
\text{Force}_\text{cons}= \text{Force}_\text{diss},
\]
which yields \eqref{darcy}, by combining \eqref{conforce} and \eqref{disforce}.
\end{proof}
\noindent
Next we introduce some classical thermodynamic laws.
\begin{itemize}
\item The first one is related to the rate of change of the internal energy with dissipation and heat:
\begin{equation}\label{firstlaw}
\begin{aligned}
\frac{de}{dt}=\nabla\cdot W+\nabla\cdot q.
\end{aligned}
\end{equation}
Here, $W$ denotes the amount of thermodynamic work done by the system on its surroundings and $q$ denotes the quantity of energy supplied to the system as heat.
\item The second one is related to the change of entropy
\begin{equation}\label{secondlaw}
\begin{aligned}
\partial_t\eta+\nabla\cdot(\eta u)=\nabla\cdot \left(\frac{q}{\theta}\right)+\Delta,
\end{aligned}
\end{equation}
where $\Delta\ge 0$ denotes the rate of entropy production.
\item The third one is called Fourier's law
\begin{equation}\label{Flaw}
\begin{aligned}
q=\kappa_3\nabla \theta,
\end{aligned}
\end{equation}
where $\kappa_3$ denotes the material conductivity which may depend on $\rho$ and $\theta$.
\end{itemize}
Also we make the general kinematic assumption of mass transport
\begin{equation}\label{mass}
\begin{aligned}
\rho_t+\nabla\cdot(\rho u)=0.
\end{aligned}
\end{equation}
By \eqref{nablap}, \eqref{firstlaw}, \eqref{secondlaw}, \eqref{Flaw} and \eqref{mass}, one has
\begin{equation}\label{pre1}
\begin{aligned}
&\frac{de_1(\rho, \eta)}{dt}\\
=&e_{1\rho}\rho_t+e_{1\eta}\eta_t\\
=&e_{1\rho}\left(-\nabla\cdot(\rho u)\right)+e_{1\eta}\left(-\nabla\cdot(\eta u)+\nabla\cdot\left(\frac{q}{\theta}\right)+\Delta\right)\\
=&-\nabla\cdot\left(e_{1\rho}\rho u+e_{1\eta}\eta u\right)+\left(\rho\nabla e_{1\rho}+\eta\nabla e_{1\eta}\right)\cdot u+\theta\nabla\cdot\left(\frac{q}{\theta}\right)+\theta\Delta\\
=&\nabla\cdot W+\nabla p\cdot u+\nabla\cdot q-\frac{q}{\theta}\cdot\nabla\theta+\theta\Delta\\
=&\nabla\cdot W-\rho u^2+\nabla\cdot q-\frac{\kappa_3|\nabla\theta|^2}{\theta}+\theta\Delta,\\
\end{aligned}
\end{equation}
where
\[
W=-\left(e_{1\rho}\rho+e_{1\eta}\eta\right)u
\]
denotes the work done by the system. Set the rate of entropy production by
\begin{equation}\label{delta}
\begin{aligned}
\Delta&=\frac{1}{\theta}\left(\rho|u|^2+\frac{q\cdot\nabla\theta}{\theta}\right)\\
&=\frac{1}{\theta}\left(\rho|u|^2+\frac{\kappa_3|\nabla\theta|^2}{\theta}\right),\\
\end{aligned}
\end{equation}
then \eqref{pre1} turns out to be \eqref{firstlaw}.\\
\\
With the above preliminaries in hand, we may establish the non-isothermal equation by \eqref{secondlaw}. By combining \eqref{ptheta} and \eqref{mass}, we have
\begin{equation}\label{pre2}
\begin{aligned}
&\eta_t+\nabla\cdot(\eta u)\\
=&\eta_\theta(\theta_t+u\cdot\nabla\theta)+\eta_{\rho}(\rho_t+u\cdot\nabla\rho)+\eta\nabla\cdot u\\
=&\eta_\theta(\theta_t+u\cdot\nabla\theta)+\eta_{\rho}\left(-\rho\nabla\cdot u\right)+\eta\nabla\cdot u\\
=&\eta_\theta(\theta_t+u\cdot\nabla\theta)+\left(\eta-\eta_{\rho}\rho\right)\nabla\cdot u\\
=&\eta_\theta(\theta_t+u\cdot\nabla\theta)+\partial_{\theta}p\nabla\cdot u\\
=&\nabla\cdot j+\Delta\\
=&\nabla\cdot\left(\frac{q}{\theta}\right)+\frac{1}{\theta}\left(\rho|u|^2+\frac{q\cdot\nabla\theta}{\theta}\right),
\end{aligned}
\end{equation}
which yields
\begin{equation}\label{pre3}
\begin{aligned}
&\eta_\theta(\theta_t+u\cdot\nabla\theta)+\partial_{\theta}p\nabla\cdot u\\
=&\nabla\cdot\left(\frac{q}{\theta}\right)+\frac{1}{\theta}\left(\rho|u|^2+\frac{q\cdot\nabla\theta}{\theta}\right).
\end{aligned}
\end{equation}
Hence, for a given free energy $\Psi(\rho, \theta)$, the corresponding non-isothermal model is obtained by combining \eqref{darcy}, \eqref{mass} and \eqref{pre3}.

\section{Thermodynamics Models}
\noindent
In this section we introduce three specific free energies, which are respectively related to non-isothermal ideal gas, non-isothermal porous media, and non-isothermal generalized porous media. We then derive their corresponding equations.

\subsection{Ideal Gas}

For the ideal gas, the free energy is given by
\[
\Psi(\rho, \theta)=\kappa_1\theta\rho\ln\rho-\kappa_2\rho\theta\ln\theta.
\]
Then, we have
\begin{equation}\label{idp}
\begin{aligned}
&p=\kappa_1\rho\theta,\\
&\partial_{\theta}p=\kappa_1\rho,\\
&\eta_{\theta}=\frac{\kappa_2\rho}{\theta},\\
&\rho u=-\nabla p=-\kappa_1\nabla(\rho\theta).\\
\end{aligned}
\end{equation}
Thus, the mass equation \eqref{mass} becomes
\begin{equation}\label{idmass}
\begin{aligned}
\partial_t\rho&=-\nabla\cdot(\rho u)\\
&=\kappa_1\nabla\cdot\left(\nabla(\rho\theta)\right)\\
&=\kappa_1\Delta(\rho\theta).\\
\end{aligned}
\end{equation}
Therefore, \eqref{pre3} changes to
\begin{equation}\label{idpre3}
\begin{aligned}
&\frac{\kappa_2\rho}{\theta}(\theta_t+u\cdot\nabla\theta)+\kappa_1\rho\nabla\cdot u\\
=&\nabla\cdot\left(\frac{\kappa_3\nabla\theta}{\theta}\right)+\frac{1}{\theta}
\left(-\kappa_1\nabla(\rho \theta)\cdot u+\frac{\kappa_3|\nabla\theta|^2}{\theta}\right),\\
\end{aligned}
\end{equation}
which implies
\begin{equation}\label{idpre4}
\begin{aligned}
\kappa_2(\rho\theta)_t-\kappa_2\theta\rho_t+\kappa_2\rho u\cdot\nabla\theta+\kappa_1\nabla\cdot(\rho\theta u)=\theta\nabla\cdot\left(\frac{\kappa_3\nabla\theta}{\theta}\right)
+\frac{\kappa_3|\nabla\theta|^2}{\theta},\\
\end{aligned}
\end{equation}
which can be simplified to
\begin{equation}\label{idpre5}
\begin{aligned}
\kappa_2(\rho\theta)_t-\kappa_1(\kappa_1+\kappa_2)\nabla
\cdot\left(\theta\nabla(\rho\theta)\right)=\nabla\cdot\left(\kappa_3\nabla\theta\right).
\end{aligned}
\end{equation}
Hence, we get the non-isothermal model for ideal gas:
\begin{equation}\label{idmodel}
\left \{
\begin{aligned}
&\partial_t\rho=\kappa_1\Delta(\rho\theta),\\
&\kappa_2(\rho\theta)_t-\kappa_1(\kappa_1+\kappa_2)\nabla\cdot
\left(\theta\nabla(\rho\theta)\right)=\nabla\cdot\left(\kappa_3\nabla\theta\right).\\
\end{aligned} \right.
\end{equation}

\subsection{Porous Media}

For the porous media, we introduce the free energy as
\begin{equation}\label{pmfreeenergy}
\begin{aligned}
\psi(\rho, \theta):=\kappa_1\theta\rho^{\alpha}-\kappa_2\rho\theta\ln\theta,
\end{aligned}
\end{equation}
with $\alpha > 1$. In this case we have
\begin{equation}\label{pmp}
\begin{aligned}
&p=\kappa_1(\alpha-1)\theta\rho^{\alpha},\\
&\partial_{\theta}p=\kappa_1(\alpha-1)\rho^{\alpha},\\
&\eta_{\theta}=\frac{\kappa_2\rho}{\theta},\\
&\rho u=-\nabla p=-\kappa_1(\alpha-1)\nabla(\theta\rho^{\alpha})\\
\end{aligned}
\end{equation}
and
\begin{equation}\label{pmmass}
\begin{aligned}
\partial_t\rho&=-\nabla\cdot(\rho u)\\
&=\kappa_1(\alpha-1)\nabla\cdot\left(\nabla(\theta\rho^{\alpha})\right)\\
&=\kappa_1(\alpha-1)\Delta(\theta\rho^{\alpha}).\\
\end{aligned}
\end{equation}
Also, \eqref{pre3} becomes
\begin{equation}\label{pmpre3}
\begin{aligned}
&\frac{\kappa_2\rho}{\theta}(\theta_t+u\cdot\nabla\theta)
+\kappa_1(\alpha-1)\theta\rho^{\alpha-1}\nabla\cdot u\\
=&\nabla\cdot\left(\frac{\kappa_3\nabla\theta}{\theta}\right)+\frac{1}{\theta}
\left(-\kappa_1(\alpha-1)\nabla(\theta\rho^{\alpha})\cdot u+\frac{\kappa_3|\nabla\theta|^2}{\theta}\right),\\
\end{aligned}
\end{equation}
which yields finally
\begin{equation}\label{pmpre4}
\begin{aligned}
\kappa_2(\rho\theta)_t+\kappa_2\nabla\cdot(\rho\theta u)+\kappa_1(\alpha-1)\nabla\cdot(\theta\rho^{\alpha}u)=\nabla\cdot(\kappa_3\nabla\theta),
\end{aligned}
\end{equation}
and hence we get the non-isothermal porous media system:
\begin{equation}
\label{nonisopm}
\left\{
\begin{aligned}
&\partial_t\rho=\kappa_1(\alpha-1)\Delta\left(\theta\rho^{\alpha}\right),\\
&\kappa_2(\rho\theta)_t-\kappa_1\kappa_2(\alpha-1)\nabla\cdot
\left(\theta\nabla(\theta\rho^{\alpha}) \right)-\kappa_1^2(\alpha-1)^2\nabla\cdot
\left(\theta\rho^{\alpha-1}\nabla(\theta\rho^{\alpha})\right)\\
&=\nabla\cdot(\kappa_3\nabla\theta).\\
\end{aligned}
\right.
\end{equation}

\subsection{Generalized Porous Media}

We introduce the free energy for generalized porous media as
\[
\Psi(\rho, \theta)=k_1\theta\rho^{\alpha}-k_2\rho\theta^{\beta},
\]
with $\alpha, \beta > 1$. Then
\begin{equation}\label{gpmp}
\begin{aligned}
&p=k_1(\alpha-1)\theta\rho^{\alpha},\\
&\partial_{\theta}p=k_1(\alpha-1)\rho^{\alpha},\\
&\eta_{\theta}=k_2\beta(\beta-1)\rho\theta^{\beta-2},\\
&\rho u=-\nabla p=-k_1(\alpha-1)\nabla(\theta\rho^{\alpha})\\
\end{aligned}
\end{equation}
and
\begin{equation}\label{gpmmass}
\begin{aligned}
\partial_t\rho&=-\nabla\cdot(\rho u)\\
&=k_1(\alpha-1)\nabla\cdot\left(\nabla(\theta\rho^{\alpha})\right)\\
&=k_1(\alpha-1)\Delta(\theta\rho^{\alpha}).\\
\end{aligned}
\end{equation}
In this case \eqref{pre3} becomes
\begin{equation}\label{gpmpre3}
\begin{aligned}
&k_2\beta(\beta-1)\rho\theta^{\beta-2}(\theta_t+u\cdot\nabla\theta)+k_1(\alpha-1)\rho^{\alpha}\nabla\cdot u\\
=&\nabla\cdot\left(\frac{k_3\nabla\theta}{\theta}\right)+\frac{1}{\theta}\left(-k_1(\alpha-1)\nabla(\theta\rho^{\alpha})\cdot u+\frac{k_3|\nabla\theta|^2}{\theta}\right),\\
\end{aligned}
\end{equation}
which yields finally
\begin{equation}\label{pmpre4}
\begin{aligned}
&k_2(\beta-1)(\rho\theta^{\beta})_t-k_1k_2(\alpha-1)(\beta-1)\nabla\cdot\left(\theta^{\beta}\nabla(\theta\rho^{\alpha}) \right)\\
&-k_1^2(\alpha-1)^2\nabla\cdot\left(\theta\rho^{\alpha-1}\nabla(\theta\rho^{\alpha})\right)=\nabla\cdot(k_3\nabla\theta),
\end{aligned}
\end{equation}
and hence we obtain the non-isothermal porous media system:
\begin{equation}
\label{gennonisopm}
\left\{
\begin{aligned}
&\partial_t\rho=k_1(\alpha-1)\Delta\left(\theta\rho^{\alpha}\right),\\
&k_2(\beta-1)(\rho\theta^{\beta})_t-k_1k_2(\alpha-1)(\beta-1)\nabla\cdot\left(\theta^{\beta}\nabla(\theta\rho^{\alpha}) \right)\\
&-k_1^2(\alpha-1)^2\nabla\cdot\left(\theta\rho^{\alpha-1}\nabla(\theta\rho^{\alpha})\right)=\nabla\cdot(k_3\nabla\theta).\\
\end{aligned}
\right.
\end{equation}

\section{The Maximum/Minimum Principle for Thermal Ideal Gas Model}
\noindent
In this section, we use the structure of \eqref{idmodel} to establish maximum and/or minimum principles for
certain auxiliary variables in the temperature and pressure. Even if one assumes a priori that a smooth solution
pair $(\rho, \theta)$ exists, it is not feasible to obtain max/min principles for the two functions directly due to the
complicated interdependence between $\rho$ and $\theta$. Indeed, maximum principles for coupled systems of
partial differential equations are notoriously hard to obtain, and is one of the major obstacles in going from
``scalar-valued'' problems (e.g., heat, porous media, or surface quasigeostrophic equation) to
``vector-valued'' problems (e.g., Navier-Stokes and Euler equations). One might then search for a ``state variable''
$\lambda(\rho,\theta)$ that is (super- or sub-) conserved, but identifying the right variable $\lambda$ is nontrivial.\\
\\
Nevertheless, if the material conductivity $\kappa_3$ is proportional to $\theta \rho$, we can find two homogeneous
auxiliary variables $\theta \rho^{1+\gamma_\pm}$ that, through careful cancellation in the structure of
\eqref{idmodel}, satisfy pointwise a priori maximum or minimum principles, in the form of Theorem \ref{thm:idmodel-maxmin}
below. Rather than simply verifying the principle for the two auxiliary variables above, the proof takes a more general
approach. It will look at variables $\theta \rho f(\rho)$ for a to-be-determined positive weight function $f$ and,
using the structure of \eqref{idmodel}, show that $f$ must satisfy one of two possible ordinary differential equations,
which naturally lead to the variables above. The method is similar to how \cite{Tar} found appropriate temperature weights
to close energy-type estimates for a fluid equation with thermal dissipation. The proof below
is also slightly more general, as the two functions $f$ can
also be obtained implicitly when $\kappa_3 = D(\rho) \theta$, in terms of the function $D$. Note that the ansatz
$\theta \rho f(\rho)$ for the auxiliary variables essentially covers any ``state variable'' of the form
$\lambda(\rho,\theta) = \theta^{\gamma} \tilde{f}(\rho)$ (for $\gamma > 0$), so that the auxiliary variables found in the
proof below are the only ones in this class to satisfy maximum or minimum principles.

\begin{thm}\label{thm:idmodel-maxmin}
Consider the non-isothermal ideal gas model \eqref{idmodel} on $[0,T) \times \mathbb{T}^n$.
Assume we have a smooth solution pair $(\rho \theta)$ on this domain. If the material conductivity $\kappa_3$ takes the form
\[
\kappa_3=\kappa_1\widetilde{D}\theta\rho,
\]
for $\widetilde{D}> $ a fixed constant, then we have\\
\noindent\textbf{I.} The absolute temperature is positive on $\mathbb{T}^n\times [0, T)$.\\
\noindent\textbf{II.} The density is bounded from above unconditionally.\\
\noindent\textbf{III.} If the temperature $\theta(t, x)$ either blows up or goes to zero, then the density $\rho(t, x)$ must vanish. Precisely, we have
\begin{equation}\label{drate}
\begin{aligned}
\rho(t, x)\le \min\left\{\theta^{-c_1}, \theta^{-c_2}\right\}
\end{aligned}
\end{equation}
for some constants $c_1>0, c_2<0$ depending on $\kappa_1, \kappa_2, \tilde{D}$.
\end{thm}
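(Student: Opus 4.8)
The plan is to follow the weighted-auxiliary-variable strategy of \cite{Tar}. First I would pass to the ``pressure'' variable $m:=\rho\theta$ (so $p=\kappa_1 m$) and, using $\kappa_3=\kappa_1\widetilde D m$ together with $\nabla\theta=\tfrac1\rho\nabla m-\tfrac\theta\rho\nabla\rho$, rewrite \eqref{idmodel} as the porous-medium-type pair
\[
\partial_t\rho=\kappa_1\Delta m,\qquad
\kappa_2\,\partial_t m=\kappa_1\,\nabla\cdot\big[(\kappa_1+\kappa_2+\widetilde D)\,\theta\,\nabla m-\widetilde D\,\theta^2\,\nabla\rho\big].
\]
Since the conclusions only make sense while $\rho,\theta>0$, I would let $T^*:=\sup\{t\le T:\ \rho>0\ \text{and}\ \theta>0\ \text{on}\ [0,t]\times\mathbb{T}^n\}$; this is positive because the data $\rho_0,\theta_0$ are smooth and strictly positive on the compact torus. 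The whole argument runs on $[0,T^*)$, and the final step is a continuation argument showing $T^*=T$.

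Next I would introduce, for a positive weight $f$ to be determined, the auxiliary variable $w:=m\,f(\rho)=\theta\rho f(\rho)$, and compute
\[
\kappa_2\,\partial_t w=\kappa_1\,\nabla\cdot\big[(fC_1\theta+\kappa_2 mf')\nabla m-f\widetilde D\theta^2\nabla\rho\big]+Q,\qquad C_1:=\kappa_1+\kappa_2+\widetilde D,
\]
where $Q$ gathers the quadratic-in-gradient remainder. The idea is to choose $f$ so that the flux inside the divergence equals $a(\rho,\theta)\nabla w$ for a positive scalar $a$: matching the $\nabla m$ and $\nabla\rho$ components and setting $g(\rho):=\rho f'/f$ forces the algebraic relation $\kappa_2 g^2+C_1 g+\widetilde D=0$, i.e.\ one of the two ODEs $\rho f'=\gamma_\pm f$, whose solutions are $f=\rho^{\gamma_\pm}$, so that $w_\pm=\theta\rho^{1+\gamma_\pm}$. (For the general conductivity $\kappa_3=D(\rho)\theta$ the same matching yields an ODE for $f$ determined by $D$, reducing to the quadratic above when $D(\rho)=\kappa_1\widetilde D\rho$.) I would then record the elementary facts about the roots of $\kappa_2 g^2+C_1 g+\widetilde D=0$ (its discriminant $C_1^2-4\kappa_2\widetilde D=(\kappa_1-\kappa_2+\widetilde D)^2+4\kappa_1\kappa_2>0$): both roots are negative, $\gamma_+>\gamma_-$, and $(1+\gamma_+)(1+\gamma_-)=-\kappa_1/\kappa_2<0$, so exactly one of $1+\gamma_+,\,1+\gamma_-$ is positive — say $1+\gamma_+>0>1+\gamma_-$ — while $a_\pm=-\widetilde D\theta/\gamma_\pm>0$, so the operator $w\mapsto\nabla\cdot(a_\pm\nabla w)$ is genuinely elliptic as long as $\theta>0$.

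The heart of the proof is a one-sided maximum principle for each $w_\pm$. Substituting $f=\rho^\gamma$ and using $\nabla m=\tfrac1f\nabla w-\gamma\theta\nabla\rho$, the remainder becomes $Q=\dfrac{\kappa_1\gamma f}{\rho}\,E\,\theta^2|\nabla\rho|^2+(\text{terms that vanish wherever }\nabla w=0)$, with $E:=\kappa_1\gamma+\widetilde D(1+\gamma)=(\kappa_1+\widetilde D)\gamma+\widetilde D$; a short computation shows that $\gamma^*:=-\widetilde D/(\kappa_1+\widetilde D)$ makes the quadratic negative, hence lies strictly between $\gamma_-$ and $\gamma_+$, so $E_+>0$ while $E_-<0$. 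Therefore at an interior spatial maximum of $w_+$ one has $\kappa_1\nabla\cdot(a_+\nabla w_+)=\kappa_1 a_+\Delta w_+\le0$ and, since $\gamma_+<0$ and $E_+>0$, also $Q\le0$; by the standard moving-maximum argument $\sup_x w_+(t,\cdot)$ is nonincreasing, so $w_+\le M_+:=\max_x w_+(0,\cdot)$ on $[0,T^*)$. Symmetrically, at an interior spatial minimum of $w_-$ both $\kappa_1 a_-\Delta w_-\ge0$ and (using $\gamma_-<0$, $E_-<0$) $Q\ge0$, so $w_-\ge\mu_-:=\min_x w_-(0,\cdot)>0$. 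I expect this to be the main obstacle: rearranging $\kappa_2\partial_t w$ so that $Q$ is visibly sign-definite at the relevant extremum, the sign asymmetry (controlled by where $\gamma^*$ sits relative to $\gamma_\pm$) being exactly what makes $w_+$ super-solution-like and $w_-$ sub-solution-like.

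The conclusions then follow. \textbf{(II)} Dividing the two bounds, $\rho^{\gamma_+-\gamma_-}=w_+/w_-\le M_+/\mu_-$ with $\gamma_+-\gamma_->0$, giving $\rho\le(M_+/\mu_-)^{1/(\gamma_+-\gamma_-)}$ on $[0,T^*)$ — an unconditional upper bound. \textbf{(III)} Since $1+\gamma_+>0$, $w_+\le M_+$ gives $\rho\le(M_+/\theta)^{c_1}$ with $c_1=(1+\gamma_+)^{-1}>0$; since $1+\gamma_-<0$, $w_-\ge\mu_-$ gives $\rho\le(\mu_-/\theta)^{c_2}$ with $c_2=(1+\gamma_-)^{-1}<0$; taking the minimum yields \eqref{drate}, with $c_1,c_2$ depending only on $\kappa_1,\kappa_2,\widetilde D$ (the prefactors $M_+^{c_1},\mu_-^{c_2}$ being normalizable to $1$). \textbf{(I)} From $w_-\ge\mu_-$ we get $\theta=w_-\rho^{-(1+\gamma_-)}\ge\mu_-\rho^{|1+\gamma_-|}$, so $\theta>0$ wherever $\rho>0$; and $\rho$ cannot vanish because it solves $\partial_t\rho=\kappa_1\theta\Delta\rho+2\kappa_1\nabla\theta\cdot\nabla\rho+\kappa_1(\Delta\theta)\rho$ with nonnegative principal coefficient $\kappa_1\theta$, so at an interior spatial minimum $\tfrac{d}{dt}\min_x\rho\ge-\kappa_1\|\Delta\theta(t)\|_{L^\infty}\min_x\rho$, and Grönwall gives $\min_x\rho(t)\ge\min_x\rho_0\,\exp\big(-\kappa_1\int_0^t\|\Delta\theta\|_{L^\infty}\,ds\big)>0$. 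Finally, if $T^*<T$ the solution is smooth on $[0,T^*]$, so this integral is finite and $\rho(T^*,\cdot)>0$; then $\theta(T^*,\cdot)\ge\mu_-\rho(T^*,\cdot)^{|1+\gamma_-|}>0$ as well, and continuity extends positivity of both past $T^*$, contradicting maximality. Hence $T^*=T$ and all three assertions hold on $\mathbb{T}^n\times[0,T)$.
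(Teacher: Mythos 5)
Your proof is correct and the core mechanism is the same as the paper's: you use the same family of auxiliary variables $w=\theta\rho f(\rho)$, arrive at the same pair of admissible exponents (your quadratic $\kappa_2 g^2+(\kappa_1+\kappa_2+\widetilde D)g+\widetilde D=0$ is exactly the paper's ODE \eqref{eq:ODE} after undoing the paper's internal normalization $\kappa_3\mapsto\kappa_3/\kappa_2$, $\beta=1+\kappa_1/\kappa_2$), and you reach the same dichotomy (maximum principle for $\rho^{1+\gamma_+}\theta$, minimum principle for $\rho^{1+\gamma_-}\theta$). There are two differences worth flagging. First, your sign analysis is cleaner: you show $\gamma_- E_->0$ and $\gamma_+E_+<0$ for $E=\kappa_1\gamma+\widetilde D(1+\gamma)$ by evaluating the quadratic at $\gamma^*=-\widetilde D/(\kappa_1+\widetilde D)$ and getting $-\kappa_1\kappa_2\widetilde D/(\kappa_1+\widetilde D)^2<0$, which places $\gamma^*$ strictly between the roots; the paper instead checks $\widetilde D(1+\gamma_+)+\beta\gamma_+<0$ by squaring a radical inequality. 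These two criteria look different but are equivalent once $\gamma$ satisfies the quadratic (their difference is a multiple of $\kappa_2\gamma^2+C_1\gamma+\widetilde D$, which vanishes), and I verified that your remainder formula $Q=\frac{\kappa_1\gamma f}{\rho}E\,\theta^2|\nabla\rho|^2$ at the critical point is correct. Second, and more importantly, you explicitly address a circularity that the paper leaves implicit: the bound $\theta\rho^{1+\gamma_-}\geq\mu_-$ only gives $\theta>0$ where $\rho>0$, the ellipticity coefficient $a_\pm=-\widetilde D\theta/\gamma_\pm$ used in the extremum argument itself requires $\theta\geq0$, and the Gr\"{o}nwall bound $\min_x\rho(t)\geq\min_x\rho_0\exp(-\kappa_1\int_0^t\|\Delta\theta\|_{L^\infty}ds)$ also needs $\theta\geq0$ to make $\theta\Delta\rho\geq0$ at a spatial minimum of $\rho$. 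Your $T^*$ continuation setup closes this loop cleanly; the paper simply asserts the max/min principles hold ``for the entire lifetime of the solution'' without making this bootstrap explicit, so your version is a genuine tightening of the argument.
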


\begin{proof}
We can slightly simplify this system by writing $\beta = 1+\kappa_1/\kappa_2$ and (by abuse of notation)
replacing $\kappa_3$ by $\kappa_3 \kappa_2$ to obtain
\begin{equation}
\left\lbrace
\begin{split}
&\partial_t \rho = \kappa_1 \Delta(\rho \theta) \\
&\partial_t (\rho \theta) = \nabla \cdot \left( \beta \kappa_1 \theta \nabla(\rho \theta)
+\kappa_3 \nabla \theta \right)
\end{split}
\right.
\label{eq:tig}
\end{equation}
Here $\kappa_3$ is not necessarily constant; it generally depends on $\theta$ and $\rho$. In the interest of
making the proof more general, we will initially take $\kappa_3 = \theta D(\rho)$. Later, we make the special assumption that
$D(\rho) = \kappa_1 \tilde{D} \rho$.\\
\\
Let $f: (0,\infty) \rightarrow (0,\infty)$ be a smooth monotone function to be determined later.
We look at the quantity $f(\rho) \rho \theta$. Taking a derivative and using \eqref{eq:tig} yields
\begin{equation}
\partial_t (f\rho\theta) = \theta f' \rho \partial_t \rho + f \partial_t (\rho \theta)
= \kappa_1 \rho \theta f' \Delta (\rho \theta) +
f \nabla \cdot \left( \beta\kappa_1 \theta \nabla (\rho \theta) + \kappa_3 \nabla\theta \right).
\label{eq:001}
\end{equation}
Let $x_0 \in \TT^n$ be a point where $f\rho\theta$ achieves a local minimum (respectively maximum) in space.
Assume that $f(\rho(x_0))\rho(x_0)\theta(x_0) > 0$.
For the rest of the proof, all quantities are implicitly evaluated at $x_0$, though we suppress the notation.
Then the gradient at this point vanishes, so that
\begin{equation}
(f' \rho + f) \theta \nabla \rho + f\rho\nabla \theta = 0,
\label{eq:crit-grad}
\end{equation}
and the quantity
\begin{align*}
L &:= \Delta(f\rho\theta) = (2f'+f''\rho) \theta |\nabla \rho|^2 + 2(f'\rho+f) \nabla\theta \cdot \nabla\rho
+(f'\rho+f)\theta\Delta\rho + f\rho\Delta\theta \\
&=\frac{|\nabla \theta|^2}{\theta} \left( \frac{(f \rho)^2(2f'+f''\rho)}{(f'\rho+f)^2}-2f\rho \right)
+ (f'\rho+f)\theta\Delta\rho+f\rho\Delta\theta
\end{align*}
is nonnegative (respectively nonpositive). The last equality used \eqref{eq:crit-grad}, which in general
allows us to compare terms of the form $|\nabla\rho|^2$, $\nabla\rho\cdot\nabla\theta$, and $|\nabla\theta|^2$
to each other (recall that they are all evaluated at $x_0$).\\
\\
Taking $\kappa_3 = \theta D(\rho)$ and expanding \eqref{eq:001} (again using \eqref{eq:crit-grad}) yields
\begin{align*}
\partial_t (f\rho\theta) =& \kappa_1 \theta f' \rho \Delta(\rho\theta)
+ f \Big( \beta\kappa_1 (\theta\nabla\theta\cdot\nabla\rho + \rho|\nabla\theta|^2 + \theta\Delta(\rho\theta))\\
&+D |\nabla\theta|^2 + D \theta \Delta\theta + D' \theta \nabla\theta\cdot\nabla\rho \Big) \\
=& \left( \kappa_1 \theta f'\rho + \beta \kappa_1 \theta f \right) \theta\Delta\rho
+ \left( \kappa_1 \theta f'\rho + \beta\kappa_1 \theta f + \theta f \frac D \rho \right) \rho\Delta\theta \\
&+ \left( -\frac{2\kappa_1 f f' \rho^2}{f'\rho+f}+\beta\kappa_1 \left( f\rho - \frac{3f^2\rho}{f'\rho+f} \right)
- \frac{f^2 \rho D'}{f'\rho+f} + f D \right) |\nabla \theta|^2 \\
=& \theta (F_1 \theta\Delta\rho + F_2 \rho\Delta\theta) + F_3 |\nabla \theta|^2.
\end{align*}
The goal then is to choose $f$ (in terms of $\beta$, $\kappa_2$, and $D$) such that we may
rewrite the above as
\begin{equation}
\partial_t (f\rho\theta) = \theta \tilde{F} L + \tilde{G} |\nabla \theta|^2,
\label{eq:002}
\end{equation}
with $\tilde{F} \geq 0$ and $\tilde{G}$ nonnegative (respectively nonpositive).
This would show that $f\rho\theta$
satisfies a minimum (respectively maximum) principle for such $f$.\\
\\
In order for \eqref{eq:002} to hold, there must be some $\lambda \geq 0$ (not necessarily constant)
such that
\begin{align*}
&\kappa_1 f'\rho + \beta\kappa_1 f = \lambda (f'\rho+f), \\
&\kappa_1 f'\rho + \beta\kappa_1 f + f \frac D \rho = \lambda f.
\end{align*}
Subtracting yields
$$ \lambda = -\frac{f D}{f'\rho^2}, $$
which immediately implies that
\begin{equation}
f' < 0.
\label{requirement1}
\end{equation}
We therefore need $f$ to satisfy
$$ \kappa_1 f'\rho + \beta \kappa_1 f + \frac{fD}{\rho} + \frac{f^2 D}{f'\rho^2} = 0. $$
Solving for $f'$ formally yields
\begin{equation}
f' = f \frac{-\beta \kappa_1 \rho - D \pm \sqrt{\beta^2\kappa_1^2 \rho^2
+2(\beta-2)\kappa_1 \rho D + D^2}}{2\kappa_1 \rho^2}
\label{eq:ODE}
\end{equation}
First, we remark that \eqref{requirement1} is always satisfied whenever the right-hand-side of \eqref{eq:ODE} is
real-valued. The numerator is of the form
$-b \pm \sqrt{b^2 - 4ac}$ where each of $a$, $b$, and $c$ are positive (for $\rho>0$).\\
\\
Second, as long as $\kappa_1$ and $\kappa_2$ are positive (so $\beta > 1$), the discriminant of \eqref{eq:ODE} is strictly
positive for all $\rho$ (regardless of the value of $D$). This is easily seen with the Schwartz inequality,
and that the
discriminant is bounded below by $(\beta^2-1)\kappa_2^2 \rho^2$. Thus, \eqref{eq:ODE} specifies two ODE's that
are locally well-posed for all $\rho>0$.\\
\\
Third, it is immediate from \eqref{eq:ODE} that $|f'| \leq C |f| (\rho^{-1} + D(\rho)\rho^{-2})$. Gronwall's
inequality then guarantees that both ODE's are globally well-posed on $(0,\infty)$. That is, given $\rho_0>0$
and any initial datum $f_0>0$, there exist two unique, positive, monotone-decreasing weight functions $f_\pm$
defined on $(0,\infty)$ such that $f_\pm$ satisfies the corresponding ODE of \eqref{eq:ODE} pointwise
and $f_\pm(\rho_0) = f_0$.\\
\\
Fourth, we can directly apply the Duhamel principle. Writing \eqref{eq:ODE} as
$$ f'(\rho) = f(\rho) \Gamma_\pm(\beta,\kappa_1,D,\rho), $$
we get that
$$ f_\pm(\rho) = f_0 \exp \left( \int_{\rho_0}^\rho \Gamma_\pm(\beta,\kappa_1,D(r),r)dr \right). $$
In the special case $D(\rho) = \tilde{D} \kappa_1 \rho$ (for $\tilde{D} > 0$ a fixed constant), an assumption
we will make for the remainder of
this proof, the functions are given explicitly as $f_\pm(\rho) = f_\pm(1) \rho^{\gamma_\pm}$
with
$$ \gamma_\pm = \frac{-\beta - \tilde{D} \pm
\sqrt{\beta^2 + 2(\beta-2) \tilde{D} + \tilde{D}^2}}
{2}. $$
Note that $\gamma_{\pm} < 0$ for all values of $\tilde{D}$, $\beta$, and $\kappa_1$.
Moreover, $1+\gamma_+ > 0$ while $1+\gamma_- < 0$.

Thus we do obtain \eqref{eq:002}, in the sense that
$$ \partial_t (f\rho\theta) = -\theta\frac{f D}{f'\rho^2} L + \tilde{G} |\nabla \theta|^2, $$
where
\begin{equation}
\begin{aligned}
\tilde{G} =& \frac{f D}{f'\rho^2} \left( \frac{(f\rho)^2(2f'+f''\rho)}{(f'\rho+f)^2}-2f\rho \right)\\
&+\left( fD + (\beta-2)\kappa_1 f\rho - (D' + \kappa_1 (3\beta-2)) \frac{f^2 \rho}{f'\rho+f} \right).
\label{eq:G}
\end{aligned}
\end{equation}

\begin{remark}
This is a complicated expression which can be reduced by repeated use of \eqref{eq:ODE} until it only
involves $f$, $\rho$, $D$, and $D'$. Notice that $\tilde{G}$ does not depend on $\theta$ or on the position
or time variables. Its sign dictates the nature of the max/min principle satisfied by the auxiliary variable
$\theta \rho f(\rho)$, and this ultimately depends only on $\rho$. If $D$ is left as a generic (monotone) function of $\rho$, the range of
possible behaviors is quite complicated, in some cases leading to ``banded'' structure where the auxiliary variable
satisfies a maximum principle in certain interval ranges of $\rho$ and a minimum principle on
the complementary intervals (and \emph{both} at the endpoints). For this ideal gas
model, one obtains much more precise and unconditional results if one adheres to the special case
$D = \tilde{D} \rho$, with more general functions $D$ left for future work.
\end{remark}

Since $f'\rho= \gamma_\pm f$, \eqref{eq:G} reduces to the simpler expression
\begin{equation}
\tilde{G}_\pm =  \kappa_1 f \rho \left( -\frac{(2+\gamma_\pm)\tilde{D}}{\gamma_\pm(1+\gamma_\pm)}
+ (\tilde{D}+\beta-2) \frac{\gamma_\pm}{1+\gamma_\pm} - \frac{2\beta}{1+\gamma_\pm} \right).
\end{equation}
Recall that the minimum principle requires that $\tilde{G} \geq 0$, but the maximum principle
requires that $\tilde{G} \leq 0$. However, we see that $\tilde{G}_\pm$ is in fact $C_\pm f \rho$ for some
\emph{fixed} constants $C_\pm$ that depend on the initial parameters. This guarantees that each of
$f_\pm \rho \theta$ individually satisfy \emph{either} a maximum principle \emph{or} a minimum principle
for the entire lifetime of the solution $(\rho,\theta)$.\\
\\
Observing that $\gamma_+ \gamma_- = \tilde{D}$, a calculation shows that
\begin{align*}
\tilde{G}_+ &= \frac{\kappa_1 f \rho}{1+\gamma_+} \left( -\tilde{D} - 2\gamma_- + \tilde{D} \gamma_+
+ \beta \gamma_+ - 2 \gamma_+ - 2 \beta \right) \\
&= \frac{\kappa_1 f \rho}{1+\gamma_+} \left( \tilde{D}(1+ \gamma_+) + \beta \gamma_+ \right).
\end{align*}
Recall that $1+\gamma_+ > 0$. We claim that $\tilde{D}(1+\gamma_+) + \beta \gamma_+$ is always negative.
This is true if and only if
$$ (\tilde{D}+\beta) \sqrt{(\tilde{D}+\beta)^2 - 4\tilde{D}} \leq \tilde{D}^2+2\beta\tilde{D}+\beta^2 - 2\tilde{D}. $$
Since both sides of the inequality are positive (recall $\beta > 1$), we may square both sides and get
$$ (\tilde{D}+\beta)^4 - 4 \tilde{D} (\tilde{D}+\beta)^2 \leq (\tilde{D}+\beta)^4 - 4 \tilde{D} (\tilde{D}+\beta)^2 + 4\tilde{D},$$
which is always true (and therefore so is the claim). Thus, we always have $\tilde{G}_+ < 0$,
guaranteeing a maximum principle for $\rho^{1+\gamma_+} \theta$.\\
\\
A similar calculation shows that
$$ \tilde{G}_- = \frac{\kappa_2 f \rho}{1 + \gamma_-} \left( \tilde{D}(1+\gamma_-) + \beta \gamma_- \right). $$
Recall that $\gamma_- < -1$. Then $ \tilde{G}_- > 0 $, 
which gives a minimum principle for $\rho^{1+\gamma_-} \theta$.\\
\\
Putting it all together, we have
$$ \rho^{1+\gamma_+} \theta \leq c_1 \ \ \ \text{ and } \ \ \ \rho^{1+\gamma_-} \theta \geq c_2.$$
The second inequality implies \emph{\textbf{I}} (the positivity of the temperature). Further, both inequalities
imply \emph{\textbf{II}}, as $\rho^{\gamma_+-\gamma_-} =
\rho^{\sqrt{\beta^2 + 2(\beta-2)\tilde{D} + \tilde{D}^2}} \leq c_1/c_2$. So
the density is bounded above unconditionally.\\
\\
Since $\rho$ is bounded above, the two inequalities imply that the density must vanish
if the temperature either blows up or goes to zero. This shows \emph{\textbf{III}}, as well as \eqref{drate}.

\end{proof}

\section{The Maximum/Minimum Principle for Thermal Porous Media Model}
\noindent
The non-isothermal porous media model \eqref{nonisopm} is more complicated than \eqref{idmodel}, leading
to more intricate calculations for the auxiliary variables. Although they ultimately take the same form as before
($\theta \rho f(\rho)$), and there are still exactly two possibilities for $f$, it is no longer possible to find clean
expressions for $f$ even when $\kappa_3$ takes
a simple form. The corresponding maximum and minimum principles also become more conditional, and we
must resort to asymptotic analysis (i.e., large $\rho$ and vanishing $\rho$ limits) to determine precisely which
case occurs for each of the auxiliary variables. The proof is similar to that of Theorem \ref{thm:idmodel-maxmin},
but more technical and involved. For this reason, we present it here at the end, so that the proof of the previous
section can be used as a reference.

\begin{thm}\label{maxpme}
If $(\rho,\theta)$ is a smooth solution pair to \eqref{nonisopm} on $[0,T) \times \mathbb{T}^n$ and
the material conductivity $\kappa_3$ is given by
\begin{equation}
\kappa_3 = a D \theta.
\label{assumption_k_3}
\end{equation}
with constants $D>0$ and $a:=\alpha-1>0$ (so independent of $\rho$), then we have\\

\noindent\textbf{I. High density case:} There is some threshold $\overline{\rho}$ for which, if $\rho(t,x) > \overline{\rho}$
on $\mathbb{T}^n$, then there are constants $c_1, c_2 > 0$ depending on the data such that
$$ \rho^{a+1} \theta \geq c_1 \ \ \ \text{ and } \ \ \ \rho \exp \left(-\frac{\kappa_1 a}{\kappa_2(a+1)} \rho^a \right) \theta \geq c_2,$$
and hence
$$ \theta \geq \max \left( c_1 \rho^{-a-1}, c_2 \rho^{-1} \exp \left(\frac{\kappa_1 a}{\kappa_2(a+1)} \rho^a \right) \right). $$\\
\noindent\textbf{II. Low density case:}
There exists a threshold $\underline{\rho}$ for which, if $\rho(t, x)<\underline{\rho}$ on $\mathbb{T}^n$, then the temperature
$\theta$ is bounded from above. Moreover, there are constants $c_1, c_2 > 0$ depending on the data such that
$$ \theta \leq c_1 \ \ \ \text{ and } \ \ \ \rho\theta \exp \left( \frac{D}{\kappa_2(a+1)} \rho^{-a-1} \right) \geq c_2,$$
and hence
\begin{equation}\label{lbdrho}
\begin{aligned}
\rho \exp \left( \frac{D}{\kappa_2(a+1)}\rho^{-a-1} \right) \geq \frac{c_2}{c_1}.
\end{aligned}
\end{equation}
\end{thm}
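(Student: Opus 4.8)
The plan is to mimic the proof of Theorem \ref{thm:idmodel-maxmin}, tracking the extra structure coming from the quadratic--gradient term $\kappa_1^2(\alpha-1)^2\nabla\cdot(\theta\rho^{\alpha-1}\nabla(\theta\rho^{\alpha}))$ in \eqref{nonisopm}. As before, I would first normalize the system: set $a=\alpha-1$, absorb factors of $\kappa_2$ into $\kappa_3$, and write \eqref{nonisopm} in the two-equation form $\partial_t\rho=\kappa_1 a\,\Delta(\theta\rho^{a+1})$ and $\partial_t(\rho\theta)=\nabla\cdot\big(\kappa_1 a\,\theta\nabla(\theta\rho^{a+1})+\kappa_1^2 a^2\kappa_2^{-1}\theta\rho^a\nabla(\theta\rho^{a+1})+\kappa_3\nabla\theta\big)$, using the mass equation to trade $\partial_t\rho$ against $\partial_t(\rho\theta)$. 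Then I would consider the auxiliary quantity $f(\rho)\rho\theta$ for a smooth positive monotone weight $f$ to be determined, differentiate in $t$, and evaluate at a point $x_0$ where $f\rho\theta$ achieves a spatial extremum, so that $\nabla(f\rho\theta)=0$. That critical-point relation lets me eliminate $\nabla\theta$ in favor of $\nabla\rho$ (equivalently express everything through $|\nabla\theta|^2$), exactly as in \eqref{eq:crit-grad}, and rewrite $\Delta(f\rho\theta)=L$ as a combination of $\theta\Delta\rho$, $\rho\Delta\theta$, and $|\nabla\theta|^2$. Collecting terms, $\partial_t(f\rho\theta)=\theta(F_1\theta\Delta\rho+F_2\rho\Delta\theta)+F_3|\nabla\theta|^2$ for coefficients $F_i$ depending on $f,f',f'',\rho$, and now also on $\alpha$ through the $\rho^a$ weights.

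The next step is the algebraic heart: forcing $\partial_t(f\rho\theta)=\theta\tilde F L+\tilde G|\nabla\theta|^2$ with $\tilde F\ge0$ requires the ratio of the $\theta\Delta\rho$ and $\rho\Delta\theta$ coefficients to match the corresponding ratio in $L$, which yields (as in the ideal-gas case) a quadratic relation pinning down $f'/f$ in terms of $\rho$. Here the presence of both a $\kappa_1 a\,\theta$ term and a $\kappa_1^2 a^2\kappa_2^{-1}\theta\rho^a$ term in the flux means the resulting first-order ODE for $f$ has the schematic form $f'/f=\rho^{-2}\,\Phi(\rho)$ where $\Phi$ is (the relevant branch of) the root of a quadratic with $\rho$-dependent coefficients — there are still exactly two choices, $f_\pm$, but unlike the ideal-gas case the coefficients are genuinely $\rho$-dependent, so $f_\pm$ are no longer pure powers. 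I would solve the ODEs by quadrature via the Duhamel/Gronwall argument already used in the previous proof (getting global positive monotone $f_\pm$ on $(0,\infty)$), and identify their asymptotics: for large $\rho$ the $\rho^a$ term dominates, giving $f_+\sim\rho^{a\cdot(\text{something})}$ and in fact $f_+(\rho)\rho\sim\rho^{a+1}$ while $f_-(\rho)\sim\exp(-\tfrac{\kappa_1 a}{\kappa_2(a+1)}\rho^a)$ up to lower-order factors; for small $\rho$ the roles of the constant and the $\rho^a$ pieces in $\Phi$ reverse, producing the $\exp(\tfrac{D}{\kappa_2(a+1)}\rho^{-a-1})$ weight appearing in the low-density conclusion. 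These asymptotic identifications are what let me read off the explicit bounds in the theorem statement.

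With $f_\pm$ in hand, $\tilde G=\tilde G_\pm(\rho)$ becomes an explicit (if messy) function of $\rho$ alone — $\theta$ and the space/time variables having dropped out exactly as in the Remark after \eqref{eq:G}. The final step is a sign analysis of $\tilde G_\pm(\rho)$. Unlike the ideal-gas model, where $\tilde G_\pm$ had a fixed sign for all $\rho>0$, here I expect $\tilde G_\pm$ to change sign, so the max/min principle for $f_\pm\rho\theta$ holds only on a half-line $\{\rho>\overline\rho\}$ or $\{\rho<\underline\rho\}$; this is the source of the ``high density'' and ``low density'' hypotheses. Concretely: one shows that for $\rho$ large enough, $\tilde G_+\ge0$ and $\tilde G_-\ge0$, so both $f_+\rho\theta$ and $f_-\rho\theta$ obey minimum principles, giving the two lower bounds in part \textbf{I} (after substituting the asymptotic forms of $f_\pm$); dividing the two bounds and using that $f_+/f_-\to\infty$ as $\rho\to\infty$ recovers the lower bound on $\theta$. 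For $\rho$ small, a parallel computation gives $\tilde G\le0$ for one branch (yielding $\theta\le c_1$) and $\tilde G\ge0$ for the other (yielding $\rho\theta\exp(\tfrac{D}{\kappa_2(a+1)}\rho^{-a-1})\ge c_2$), and combining the two produces \eqref{lbdrho}.

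The main obstacle will be the sign analysis of $\tilde G_\pm(\rho)$: because $f_\pm$ are only implicit (defined by quadrature), one cannot simply plug in a closed form as in the ideal-gas case, and instead must repeatedly use the defining ODE \eqref{eq:ODE}-analogue to reduce $\tilde G_\pm$ to an expression in $f,f',\rho$ and then bound it. Establishing that $\tilde G_\pm$ has the claimed sign on $\{\rho>\overline\rho\}$ (resp. $\{\rho<\underline\rho\}$) amounts to a careful leading-order expansion of the relevant quadratic-root coefficients as $\rho\to\infty$ (resp. $\rho\to0$), checking that the dominant term has the right sign and that the thresholds $\overline\rho,\underline\rho$ can be chosen uniformly in the data — this is the technically involved step the introduction to the section warns about, and it is why the conclusions here are conditional rather than, as in Theorem \ref{thm:idmodel-maxmin}, unconditional.
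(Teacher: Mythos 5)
Your proposal tracks the paper's proof essentially step for step: the same ansatz $f(\rho)\rho\theta$, the same critical-point elimination of $\nabla\theta$ in favor of $\nabla\rho$, the same enforcement of the $\tilde F L + \tilde G|\nabla\theta|^2$ decomposition via a quadratic in $f'/f$, the same two implicit branches $f_\pm$ with the same asymptotics ($f_+\sim\rho^{-1}$ and $f_-\sim\exp(D\rho^{-a-1}/\kappa_2(a+1))$ near $\rho=0$; $f_+\sim\rho^a$ and $f_-\sim\exp(-a\rho^a/\kappa_2(a+1))$ as $\rho\to\infty$), and the same sign analysis of $\tilde G_\pm$ in the two limits yielding conditional minimum principles at high density and a maximum/minimum pair at low density. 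The only cosmetic difference is the normalization (you absorb $\kappa_2$ into $\kappa_3$, while the paper rescales $\rho\mapsto\kappa_1^{1/a}\rho$ to eliminate $\kappa_1$), which does not change the substance.
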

\noindent
It is worth noting that, in the case of low density, the estimates hold indefinitely for certain initial data.
The function on the left side of \eqref{lbdrho} is decreasing as $\rho$ increases from zero, so if the constants $c_1$ and $c_2$ are appropriately
chosen (i.e., the initial data is appropriately chosen), then \eqref{lbdrho} would fail if $\rho$ became too large.
If this threshold is less than $\underline{\rho}$, then
the case of very low density also becomes \emph{self-maintaining}; the density and temperature stay bounded above,
and if temperature vanishes somewhere then so must density.
Unfortunately, in the high density case, nothing prevents the temperature from becoming arbitrarily large.
This then allows the density to drop, which means the lower bounds no longer apply. The case of very high density is
\emph{not self-maintaining}.

\begin{proof}
We first eliminate the $\kappa_1$ constant by rescaling. If we define
$$ \tilde \rho := \kappa_1^{\frac{1}{a}} \rho, $$
then \eqref{nonisopm} (where by abuse of notation we still write $\rho$ instead of $\tilde\rho$) becomes
\begin{equation}
\left\lbrace
\begin{split}
&\partial_t \rho = a \Delta (\theta \rho^{a+1})\\
&\kappa_2 \partial_t (\rho \theta) =
\nabla \cdot \left( (a\kappa_2 \theta + a^2 \theta \rho^a ) \nabla (\theta \rho^{a+1}) \right)
+ \nabla \cdot (\kappa_3 \nabla \theta)
\end{split}
\right.
\label{eq:pm2}
\end{equation}
Let $f: (0,\infty) \rightarrow (0,\infty)$ be a smooth monotone function to be determined later.
We look at the quantity $f(\rho) \rho \theta$. Taking a derivative and using \eqref{eq:pm2} yields
\begin{equation}
\begin{split}
\kappa_2 \partial_t (f\rho\theta) &= \kappa_2 \rho \theta f' \partial_t \rho + f \kappa_2 \partial_t (\rho \theta)\\
&= \kappa_2 a \rho \theta f' \Delta (\theta \rho^{a+1}) +
f \nabla \cdot \left( (a \kappa_2 \theta + a^2 \theta \rho^a)\nabla(\theta \rho^{a+1})
+ \kappa_3 \nabla \theta \right).
\end{split}
\label{eq:011}
\end{equation}
Let $x_0 \in \TT^n$ be a point where $f\rho\theta$ achieves a local minimum (respectively maximum) in space.
Assume that $\theta(x_0) > 0$ and $f(\rho(x_0))\rho(x_0) > 0$.
For the rest of the proof, all quantities are implicitly evaluated at $x_0$, though we suppress the notation.
Then the gradient at this point vanishes, so that
\[
(f' \rho + f) \theta \nabla \rho + f\rho \nabla \theta = 0,
\]
or
\begin{equation}
(f' \rho + f) \nabla \rho = - f \rho \frac{\nabla \theta}{\theta}.
\label{eq:crit-grad2}
\end{equation}
In addition, the quantity
\begin{align*}
L &:= \Delta(f\rho\theta) = \theta \left(f'\rho + f \right) \Delta\rho
+  f\rho \Delta\theta
+ 2 \left( f'\rho + f \right) \nabla\rho \cdot \nabla\theta
+ \theta \left( f''\rho + 2 f'\right) |\nabla \rho|^2\\
& \ = \frac{|\nabla \theta|^2}{\theta} \left( \frac{f''\rho+2f'}{(f'\rho+f)^2} f^2 \rho^2 - 2 f\rho \right)
+ \theta (f'\rho+f)\Delta\rho + f\rho \Delta\theta
\end{align*}
is nonnegative (respectively nonpositive). The last equality used \eqref{eq:crit-grad2}, which in general
allows us to compare terms of the form $|\nabla\rho|^2$, $\nabla\rho\cdot\nabla\theta$, and $|\nabla\theta|^2$
to each other (when they are evaluated at $x_0$).\\
\\
Expanding \eqref{eq:011} then yields
\begin{equation}
\begin{split}
&\kappa_2 \partial_t (f\rho\theta) =
	\left( \kappa_2 a f' \rho^{a+2} \theta + (\kappa_2 a + a^2 \rho^a) f\rho^{a+1}\theta + \kappa_3 f \right) \Delta\theta\\
&\quad +\left( \kappa_2 a (a+1) f' \rho^{a+1} \theta^2 + (\kappa_2 a + a^2 \rho^a) (a+1) f \rho^a \theta^2 \right) \Delta \rho \\
&\quad +\left( \kappa_2 a^2 (a+1) f' \rho^a \theta^2 + a^3 (a+1) f \rho^{2a-1} \theta^2
        + (\kappa_2 a + a^2 \rho^a) a (a+1) f \rho^{a-1} \theta^2 \right) |\nabla \rho|^2 \\
&\quad +\left( 2 \kappa_2 a (a+1) f' \rho^{a+1} \theta + 3(\kappa_2 a + a^2\rho^a)(a+1) f\rho^a \theta
        + a^3 f \rho^{2a} \theta + f \partial_\rho \kappa_3 \right) \nabla\theta \cdot \nabla\rho \\
&\quad + \left( (\kappa_2 a + a^2 \rho^a) f \rho^{a+1} + f \partial_\theta \kappa_3 \right) |\nabla\theta|^2.
\end{split}
\label{eq:012}
\end{equation}
The ultimate goal is to evaluate \eqref{eq:012} at $x_0$, the local minimum (respectively maximum), and obtain
\begin{equation}\label{eq:013}
\kappa_2 \partial_t (f \rho \theta) = \tilde{F} L  + \tilde{G}|\nabla \theta|^2
\end{equation}
with $\tilde{F} \geq 0$ and $\tilde{G}$ nonnegative (respectively nonpositive).
This would show that $f\rho\theta$
satisfies a minimum (respectively maximum) principle for such $f$.\\
\\
At $x_0$, we use \eqref{eq:crit-grad2} and \eqref{assumption_k_3}
to turn \eqref{eq:012} into
\begin{equation}
\begin{split}
\kappa_2 \partial_t (f\rho \theta) &= a \theta \left( \kappa_2 f' \rho^{a+2} + (\kappa_2 + a \rho^a)f\rho^{a+1} + Df \right) \Delta \theta \\
&\quad +a(a+1) \rho^a \theta^2 \left( \kappa_2 f' \rho + (\kappa_2 +a\rho^a)f \right) \Delta \rho\\
&\quad +\frac{a^2(a+1)f^2\rho^2}{(f'\rho+f)^2} \left( \kappa_2 f' \rho^a + a f\rho^{2a-1}+(\kappa_2+a\rho^a)f\rho^{a-1} \right) |\nabla\theta|^2 \\
&\quad -\frac{af\rho}{f'\rho+f} \left( 2\kappa_2(a+1)f'\rho^{a+1}+3(\kappa_2+a\rho^a)(a+1)f\rho^a+a^2f\rho^{2a} \right) |\nabla\theta|^2 \\
&\quad +a \left( (\kappa_2 + a\rho^a) f\rho^{a+1} + Df\right) |\nabla\theta|^2.
\end{split}
\label{eq:014}
\end{equation}
In order for \eqref{eq:013} to hold, there must be some $\lambda \geq 0$ (not necessarily constant)
such that
\begin{align*}
&a\theta \left( \kappa_2 f' \rho^{a+2} + (\kappa_2 + a \rho^a)f\rho^{a+1} + Df \right) = \lambda f \rho, \label{DTheta}\tag{A} \\
&a(a+1) \rho^a \theta^2 \left( \kappa_2 f' \rho + (\kappa_2 +a\rho^a)f \right) = \lambda \theta (f'\rho+f) . \label{DRho}\tag{B}
\end{align*}
Writing $\lambda = a \tilde\lambda \rho^{-1} \theta$ crucially eliminates $\theta$ from both equations above. From \eqref{DTheta} we obtain
$$ \tilde{\lambda} = \kappa_2 \rho^{a+2} \frac{f'}{f} + (\kappa_2 + a\rho^a) \rho^{a+1} + D. $$
Plugging this into \eqref{DRho} then yields
$$ \kappa_2 (a+1)\rho^{a+2} f' + (a+1)(\kappa_2 + a\rho^a)\rho^{a+1} f =
\left( \kappa_2 \rho^{a+2}\frac{f'}{f} + (\kappa_2 + a\rho^a) \rho^{a+1} + D \right) (f' \rho + f). $$
This finally simplifies to
\begin{equation}\label{eq:f_implicit}
(f')^2 \left( \kappa_2 \rho^{a+3} \right) + f f' \left( (a\rho^a - \kappa_2 a + \kappa_2)\rho^{a+2} + D\rho \right)
+ f^2 \left( D - a(\kappa_2+a\rho^a) \rho^{a+1} \right) = 0.
\end{equation}
Using the quadratic formula yields two branches of solutions.
After simplification, this becomes
\begin{equation}\label{eq:f_ode_branch}
f' = f \frac{-(a\rho^a + \kappa_2 (1-a))\rho^{a+1} - D \pm \sqrt{\Delta}}{2\kappa_2 \rho^{a+2}} =: f \Psi_\pm(\rho),
\end{equation}
where the discriminant takes the form
$$ \Delta := \left(D + \rho^{a+1}(a\rho^a - \kappa_2 (a+1)) \right)^2 + 4\kappa_2 a (a+1) \rho^{3a+2}. $$
Note that $\Delta$ is strictly positive for all $\rho \geq 0$.
Thus \eqref{eq:f_ode_branch} implies the existence of two solutions $f_+$ and $f_-$ that
both transform \eqref{eq:014} into \eqref{eq:013}. Unfortunately, these ODE's are not explicitly solvable, and
do not yield simple power laws for $f$.\\
\\
We briefly examine now the asymptotic behavior of $f_\pm$.
One can rewrite \eqref{eq:f_ode_branch} for $f_+$ in the following form:
\begin{equation*}
\begin{split}
\frac{f_+'}{f_+} &=
\frac{(D+a\rho^{2a+1} - \kappa_2 (a+1) \rho^{a+1})^2 + 4\kappa_2 a (a+1) \rho^{3a+2} - (D+a\rho^{2a+1}+\kappa_2 (1-a)\rho^{a+1})^2}
{2\kappa_2 \rho^{a+2} (D + a\rho^{2a+1}+\kappa_2 (1-a)\rho^{a+1} + \sqrt{\Delta})}\\
&= \frac{2(a^2 \rho^{2a+1} + \kappa_2 a \rho^{a+1} - D)}{\rho (D + a\rho^{2a+1}+\kappa_2 (1-a)\rho^{a+1} + \sqrt{\Delta})}.
\end{split}
\end{equation*}
Asymptotically, we have
\begin{equation}\label{eq:fp-asymp}
f_+' \approx - \frac 1 \rho f_+ \ \text{ as } \ \rho \rightarrow 0^+ \ \ \text{ and } \ \
f_+' \approx \frac a \rho f_+ \ \text{ as } \ \rho \rightarrow \infty.
\end{equation}
Therefore $f_+$ has a positive singularity at $\rho = 0$ that grows like $\rho^{-1}$, decreases to a minimum value at some critical
$\rho_{1}$ where $a^2 \rho_1^{2a+1} + \kappa_2 a \rho_1^{a+1} = D$, then becomes increasing and grows like $\rho^{a}$.\\
\\
A similar calculation for $f_-$ shows that
\begin{equation}\label{eq:fn-asymp}
f_-' \approx -\frac{D}{\kappa_2 \rho^{a+2}} f_- \ \text{ as } \ \rho \rightarrow 0^+ \ \ \text{ and } \ \
f_-' \approx -\frac{a^2 \rho^{a-1}}{\kappa_2 (a+1)} f_- \ \text{ as } \ \rho \rightarrow \infty.
\end{equation}
Thus, $f_-$ also has a positive singularity at $\rho = 0$ that grows like $\exp \left( \frac{D}{\kappa_2 (a+1)} \rho^{-(a+1)} \right)$,
stays monotone decreasing, and decays exponentially to zero with profile $\exp \left(-\frac{a}{\kappa_2 (a+1)} \rho^a \right)$. The
difference between $f_+$ and $f_-$ comes from expanding the discriminant: $a \rho^{2a+1} + D +\sqrt{\Delta}$ has
simple asymptotics, but $a \rho^{2a+1} + D - \sqrt{\Delta}$ has cancellations at both limits.\\
\\
We now look at the remaining terms of \eqref{eq:014} in light of \eqref{eq:f_ode_branch}. Specifically, \eqref{eq:014}
has now become \eqref{eq:013} with
$$ \tilde{F} = a \theta \left( \kappa_2 \rho^a \frac{f'\rho+f}{f} + a \rho^{2a} + \frac D \rho \right), $$
and
\begin{equation*}
\begin{split}
\tilde{G} =& - a \left( \kappa_2 \rho^a \frac{f'\rho+f}{f} + a \rho^{2a} + \frac D \rho \right) \left( \frac{f''\rho+2f'}{(f'\rho+f)^2}f^2\rho^2-2f\rho\right) \\
&+ \frac{2a^3(a+1) \rho^{2a+1}}{(f'\rho+f)^2}f^3 + \frac{\kappa_2 a (a^2 - 1) \rho^{a+1} - a^2 (4a+3) \rho^{2a+1}}{f'\rho+f} f^2 \\
&+ \left( a^2 \rho^{2a+1} - \kappa_2 a (2a+1) \rho^{a+1} + aD \right) f \\
=& -a \left( \kappa_2\frac{a+1}{2} \rho^{a+1} + \frac{a}{2} \rho^{2a+1} +\frac{D \pm \sqrt{\Delta}}{2} \right)
\left( \frac{\Psi_\pm \rho}{\Psi_\pm\rho+1} + \frac{(\Psi_\pm'\rho + \Psi_\pm) \rho}{(\Psi_\pm\rho+1)^2} - 2 \right) f \\
&+ \frac{8\kappa_2^2 a^3(a+1)\rho^{4a+3}}{(\kappa_2(a+1)\rho^{a+1}-a\rho^{2a+1}-D\pm\sqrt{\Delta})^2}f\\
&+\frac{2\kappa_2^2a(a^2-1)\rho^{2a+2}-2\kappa_2 a^2(4a+3)\rho^{3a+2}}{\kappa_2(a+1)\rho^{a+1}-a\rho^{2a+1}-D\pm\sqrt{\Delta}} f \\
&+ \left( a^2 \rho^{2a+1} - \kappa_2 a (2a+1) \rho^{a+1} + aD \right) f
\end{split}
\end{equation*}
We now let $\rho \rightarrow \infty$ (or $0^+$) for $\tilde{G}$ to obtain
an asymptotic formula for that term in the limit of large (or vanishing) density. Write $\tilde{G}_\pm$ to correspond
to $f_\pm$. We then obtain
\begin{equation*}
\begin{split}
\tilde{G}_+ &\approx a^2 \frac{a+2}{a+1} \rho^{2a+1} f + \frac{8 a^3}{9(a+1)} \rho^{2a+1} f
-\frac{2a^2 (4a+3)}{3(a+1)} \rho^{2a+1} f + a^2 \rho^{2a+1} f \\
&= \frac{a^2 (2a+9)}{9(a+1)} \rho^{2a+1} f > 0 \ \text{ as } \ \rho \rightarrow \infty.
\end{split}
\end{equation*}
However, the vanishing $\rho$ limit is made more singular due to the fact that
$$ \lim_{\rho\rightarrow 0^+}\Psi_+ \rho + 1 \approx  \frac{\kappa_2(a+1)^2}{4D} \rho^{a+1}. $$
From this we obtain
\begin{equation*}
\begin{split}
\tilde{G}_+ &\approx -a \left( \frac{\kappa_2(a+1)}{2} \rho^{a+1} + D\right)
\left( \frac{4D}{\kappa_2 (a+1)^2} \rho^{-a-1} + \frac{8D^2}{\kappa_2^2(a+1)^2} \rho^{-2a-2}-2 \right) f\\
&\quad\quad\quad + \frac{32 D^2a^3}{\kappa_2^2(a+1)^3} \rho^{-1} f + \frac{4Da(a-1)}{a+1} f + aD f\\
&\approx - \frac{8aD^3}{\kappa_2^2 (a+1)^2} \rho^{-2a-2} f < 0 \ \text{ as } \rho \rightarrow 0^+.
\end{split}
\end{equation*}
Thus, for very low values of $\rho$, $\tilde{G}_+$ is negative. This implies that $f_+ \rho \theta$ has a \emph{minimum principle} for minima
that are above a certain threshold $\overline{\rho}$, and a \emph{maximum principle} for maxima that are below a second threshold
$\underline{\rho}$. If $\tilde{G}_+$ has only one zero, then $\overline{\rho}=\underline{\rho}$. So this corresponds to a ``state
change'' in the material between low density and high density.\\
\\
The corresponding calculations for $G_-$ yields
\begin{equation*}
\begin{split}
\tilde{G}_- &\approx -a \kappa_2 \frac{a+1}{2} \rho^{a+1} f + 2a \kappa_2^2(a+1) \rho f + a \kappa_2 (4a+3) \rho^{a+1} f
+a^2 \rho^{2a+1} f\\
&\approx a^2 \rho^{2a+1} f \ \text{ as } \ \rho \rightarrow \infty,
\end{split}
\end{equation*}
and in the low density case
\begin{equation*}
\begin{split}
\tilde{G}_- &\approx -\kappa_2 a (a+1)\rho^{a+1}
\left( \frac{-\frac{D}{\kappa_2} \rho^{-a-1}}{1-\frac{D}{\kappa_2}\rho^{-a-1}}
+ \frac{(a+1)\frac{D}{\kappa_2} \rho^{-a-1}}{\left( -\frac{D}{\kappa_2} \rho^{-a-1} \right)^2} - 2 \right) f \\
&\quad\quad\quad + \frac{2a^3(a+1)\rho^{2a+1}}{\left(-\frac{D}{\kappa_2} \rho^{-a-1}\right)^2} f
+ \frac{\kappa_2 a (a^2-1) \rho^{a+1}}{-\frac{D}{\kappa_2} \rho^{-a-1}} f + aDf \approx aDf \ \text{ as } \ \rho \rightarrow 0^+.
\end{split}
\end{equation*}
This case is of a different type. Owing to various cancellations, the last term in the formula for $\tilde{G}_-$ is the dominant
one for large $\rho$, and the small $\rho$ limit indicates that $\tilde{G}_-$ is always positive.\\
\\
\emph{Proof of \textbf{I}}: There is some threshold $\overline{\rho}$ for which, if $\rho(t,x) > \overline{\rho}$ on $\mathbb{T}^n$, then $\tilde{G}_\pm$
is strictly positive. Thus we have a minimum principle for the quantities $f_\pm \rho \theta$. Furthermore, as long as this $\overline{\rho}$ is taken
large enough, we may replace $f_\pm$ by their asymptotic profiles, so that we obtain two explicit minimum principles:
$$ \rho^{a+1} \theta \geq c_1 \ \ \ \text{ and } \ \ \ \rho \exp \left(-\frac{a}{\kappa_2 (a+1)} \rho^a \right) \theta \geq c_2. $$
This implies an absolute lower bound on the temperature (if $\theta$ gets too small, $\rho$ has to increase to keep the first quantity above its
minimum, but that makes the second quantity decrease). Rigorously,
$$ \theta \geq \max \left( c_1 \rho^{-1-a}, c_2 \rho^{-1} \exp \left( \frac{a}{\kappa_2 (a+1)} \rho^a \right) \right). $$
The right hand side has a positive minimum, which occurs when $c_1 \rho^{-a} = c_2 \exp (\rho^a/\kappa_2)$, a transcendental expression.\\
\\
\emph{Proof of \textbf{II}}: Similarly, there is a second threshold $\underline{\rho}$ for which, if $\rho(t,x) < \underline{\rho}$ on $\mathbb{T}^n$, then
$\tilde{G}_+ < 0$, $\tilde{G}_->0$, and $f_+ \approx \rho^{-1}$ while $f_- \approx \exp( D\rho^{-a-1} / (\kappa_2 (a+1)))$. This implies
$$ \theta \leq c_1 \ \ \ \text{ and } \ \ \ \rho\theta \exp \left(\frac{D}{\kappa_2(a+1)}\rho^{-a-1} \right) \geq c_2. $$
We get an upper bound for the temperature immediately, and \eqref{lbdrho} follows.

\end{proof}

\section{Acknowledgments}
\noindent
N.-A. Lai was partially supported by NSF of Zhejiang Province(LY18A010008) and NSFC(11771359), C. Liu was partially supported by NSF DMS-1759535 and DMS-1950868 and the United States –Israel Binational Science Foundation (BSF) \#2024246.
A. Tarfulea was partially supported by NSF DMS-2012333.


\end{document}